\documentclass{article}   	
\usepackage{geometry}               
\usepackage{graphicx,color}	
						
\usepackage[dvipsnames]{xcolor}
\usepackage{amssymb,amsmath,amsthm,amsfonts}
\usepackage[english]{babel}
\usepackage[utf8]{inputenc}

\usepackage[pdfencoding=auto,colorlinks]{hyperref}
\hypersetup{linkcolor=blue,citecolor=blue,filecolor=black,urlcolor=black}

\usepackage[sc]{mathpazo}

\newtheorem{proposition}{Proposition}[section]
\newtheorem{theorem}[proposition]{Theorem}

\newtheorem{lemma}[proposition]{Lemma}

\theoremstyle{definition}

\theoremstyle{remark}
\newtheorem{remark}[proposition]{Remark}
\numberwithin{equation}{section}

\newcommand{\dsum}{\displaystyle \sum}
\newcommand{\N}{{\mathbb{N}}}

\newcommand{\R}{{\mathbb{R}}}

\newcommand{\loc}{{\mathrm{loc}}}

\newcommand{\Bcal}{{\mathcal{B}}}

\def\sideremark#1{\ifvmode\leavevmode\fi\vadjust{\vbox to0pt{\vss
 \hbox to 0pt{\hskip\hsize\hskip1em
 \vbox{\hsize2.1cm\tiny\raggedright\pretolerance10000
  \noindent #1\hfill}\hss}\vbox to15pt{\vfil}\vss}}}%

\title{On the emergence of dead cores in elliptic systems with sublinear competitive interactions}
\author{Hugo Tavares, Gianmaria Verzini, Junwei Yu}

\begin{document}

\maketitle

\begin{abstract}
In this paper, we study semilinear elliptic systems with sublinear coupling terms, showing that, under competition-type interactions, solutions typically have dead cores, i.e., they vanish on certain open subsets of the domain. We apply our results to a large class of solutions treated in the literature, for instance to ground states and least energy sign-changing solutions, under Dirichlet boundary conditions or in the whole space. The proofs are based on a general result stating that subsolutions to a certain sublinear equation have dead cores, and on uniform H\"older bounds.
\end{abstract}%
\noindent
{\footnotesize \textbf{AMS-Subject Classification}}
{\footnotesize 35J47, 35B45, 35B51.}\\
{\footnotesize \textbf{Keywords}}  
{\footnotesize Comparison principle. Solutions with dead core. Sublinear couplings. Gradient systems with competitive interactions. Semilinear elliptic systems.}

\section{Introduction and main results}
Consider the following semilinear elliptic system with $k\ge 2$ competing densities $u_i$:
\begin{equation}\label{eq:gen_sys}
-\Delta u_i=f_i(x,u_i)-\beta |u_i|^{p-1} u_i \dsum_{j\neq i}a_{ij} |u_j|^{p+1} \quad \text{ in }\Omega,  \qquad  i=1,\ldots, k,
\end{equation}
where $\Omega$ is a domain in $\R^N$ (possibly the whole space), $N\geq 1$, the functions 
$f_i$ are suitable  nonlinearities, and the parameters $p>0$, $\beta>0$ and $a_{ij}=a_{ji}>0$ ($i\neq j$) 
encode the competitive nature of the mutual interactions. Notice that the competition terms have global 
homogeneity of degree $ 2p+1>1$, thus they are always superlinear in the pair $(u_i,u_j)$, but they may be 
sublinear in the single density $u_i$, provided $p<1$. 

This system has been treated in several different contexts, for instance when $\Omega=\R^N$ or when 
$\Omega$ is a smooth bounded domain, under Dirichlet or Neumann boundary conditions, 
with nonlinearities like 
\[
f_i(x,s)= \mu_i |s|^{q-1}s + (\lambda_i - V_i(x)) s,
\] 
in the (globally) superlinear, Sobolev--subcritical and critical cases
\begin{equation}\label{eq:subcrit}
1<q\le 2^*-1,\qquad 0<p\le 2^*/2-1.
\end{equation}
Not being feasible to quote all topics and references here, without being exhaustive we highlight only that 
the results in the literature span from the existence and multiplicity of nonnegative or sign-changing, fully nontrivial solutions (i.e., with all components $u_i$ being non trivial), with $f_i$ given (see for instance \cite{WeiWeth_ARMA2008,MR2592975,MR2629888,ChenZou_ARMA2012,MauroSchieraTavares_JDE2016,ClappPistoia_CalcVar_2018,ClappSzulkin_NoDEA_2019,LiWeiWu_JDE_2024}) or under $L^2$-constraints \cite{MR3539467,MR3639521,MR3777573,MR3918087,LiLiuZou_JDE_2026}, to the asymptotic behavior of solutions as $\beta\to \infty$, see \cite{MR2599456,DancerWangZhang_JFA_2012,SoaveZilio_ARMA2015}. For other results and references, we refer to the cited bibliography.

\smallbreak

For families of nonnegative solutions, when $p\geq 1$, one can apply the strong maximum principle to (regular) nonnegative solutions
\[
u_\beta = (u_{1,\beta},\dots, u_{k,\beta})
\] 
of \eqref{eq:gen_sys} to  obtain that, for each $i$, either $u_{i,\beta}\equiv0$ or   $u_{i,\beta}>0$ in $\Omega$. On the other hand, when $0<p<1$, the strong maximum principle 
cannot be used in its standard form,  thus one may wonder if solutions with \emph{dead core}, i.e. 
which vanish on a set with nonempty interior, may exist.  In the literature, this question has been 
generally overlooked. Indeed, notice that \eqref{eq:subcrit} allows for values of $p$ smaller than 1 (actually, this is necessary, for $N\geq 5$); on the other hand, 
several papers show existence of nonnegative, fully nontrivial solutions, stating (without proof) that these solutions are 
strictly positive. Our paper shows that, if $\beta$ is sufficiently large, such fully nontrivial, nonnegative solutions actually have a dead core. 

These  questions are also related with \cite[Theorem 1.1]{MR4831233}, which proves, using a unique continuation 
property, that the \emph{common} nodal set $\cap_{i} \{u_{i,\beta}=0\}$ cannot have a point of infinite vanishing 
order. In particular, solutions with components exhibiting a common dead core cannot exist. This result is somehow 
natural since, as we already mentioned, the homogeneity of the interaction terms $(s,t)
\mapsto |s|^{p-1}s|t|^{p+1}$ is $ 2p+1>1$.  

Recently, however, the construction of  {infinitely many families of solutions} with dead cores, for a system like \eqref{eq:gen_sys}, was provided in \cite{MR5032422} by means of Lyapunov-Schmidt reduction techniques. This is shown  in the case of $k=2$ components, radial potentials with specific behavior at infinity, and 
power nonlinearities, in the whole $\R^N$, and is, up to our knowledge, the first existence result which explicitly mentions dead core solutions for systems like \eqref{eq:gen_sys}.\medskip

The main aim of this paper is to show that, for system  \eqref{eq:gen_sys} with $0<p<1$, the existence of dead core solutions \emph{is the typical situation}, at least for large $\beta>0$, i.e., when the competition becomes the prevailing phenomena, as long as global vanishing does not occur. In our main result, we deal with weak solutions (possibly signed) of \eqref{eq:gen_sys}, for 
any choice of the Caratheodory functions $f_i$ such that:
\begin{equation}\label{eq:ass_f}
\text{for some $A>0$,}\qquad  
\sup \left\{\frac{|f_i(x,s)|}{|s|^p} : x\in\Omega,\  |s| \le A,\ i=1,\dots,k\right\}=:L_A<+\infty.
\end{equation}
\begin{theorem}\label{thm:main1}
Let  $\Omega\subset\R^N$ ($N\ge 1$) be open, $a_{ij}=a_{ji}>0$ ($i\neq j$), $0<p<1$, and assume that $(f_1,\ldots, f_k)$ satisfies \eqref{eq:ass_f} for some $A>0$. 

Let $(u_{\beta})_\beta\subset H^1_{\loc}(\Omega,\R^k)$ be a family of solutions of \eqref{eq:gen_sys} such that, for every $\beta>0$:
\begin{enumerate}
\item\label{i:1} $\|u_\beta\|_{L^\infty(\Omega)}\leq A$;
\item\label{i:2} $\|u_{h,\beta}\|_{L^\infty(\Omega')}\geq \bar \delta$, for some $1 \le h \le k$, $\Omega'\Subset \Omega$ open and relatively compact, and $\bar \delta\in (0,A)$.
\end{enumerate}
Then, for every $0<\delta<\bar\delta$, there exist $\bar \beta,R>0$ such that, for every $\beta>\bar \beta$,
\[
|u_{h,\beta}|>\delta,\qquad u_{i,\beta}\equiv0\text{ for every }i\neq h,\qquad \text{in } 
B_R(x_\beta),
\]
for suitable $x_\beta\in \Omega'$. 
In particular, all components except possibly one have a dead core containing a ball with a fixed radius.
\end{theorem}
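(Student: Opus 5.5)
Our plan rests on two ingredients, both announced in the abstract: uniform H\"older bounds in $\beta$, and a dead core lemma for subsolutions of a sublinear equation. The lemma we will prove and use has the following form. Let $w\ge 0$ in $H^1(B_r)$ satisfy, in the weak sense,
\[
-\Delta w \le L w^p - M w^p \quad \text{in } B_r, \qquad w\le A,
\]
with $M$ large. Then $w\equiv 0$ in $B_{r/2}$ as soon as $(M-L)\,r^2 \gtrsim A^{1-p}$. We will prove it by comparison with an explicit compactly supported supersolution. The radial function $\psi(x)=c\,(|x|-r/2)_+^{2/(1-p)}$ solves $-\Delta\psi \ge -(M-L)\psi^p$ for a suitable $c$ depending on $M-L$. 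Translating this profile into $B_r$, it dominates $w$ on $\partial B_r$ once $c\,(r/2)^{2/(1-p)} \ge A$, which is the case for $M$ large. Kato's inequality and the monotonicity of $s\mapsto s^p$ give the comparison principle, because the operator $-\Delta + (M-L)s^p$ is monotone.

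The first step is the uniform H\"older bound. We will show that $(u_\beta)$ is bounded in $C^{0,\alpha}(\overline{\Omega''})$ for some $\Omega'\Subset\Omega''\Subset\Omega$, uniformly in $\beta$. We expect this to be the main obstacle. The standard approach for competition systems (Noris--Tavares--Terracini--Verzini) uses a blow-up argument together with Almgren or Alt--Caffarelli--Friedman monotonicity formulas. We will adapt it to signed solutions with sublinear self-coupling, using that $|u_i|$ is a subsolution via Kato's inequality and that $|f_i|\le L_A|u_i|^p$ by \eqref{eq:ass_f}. Here we would first try to reduce everything to the nonnegative functions $|u_{i,\beta}|$, which satisfy
\[
-\Delta |u_i| \le L_A |u_i|^p - \beta |u_i|^p\sum_{j\ne i} a_{ij}|u_j|^{p+1}.
\]

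With the H\"older bound in hand, the proof concludes as follows. Pick $x_\beta\in\overline{\Omega'}$ with $|u_{h,\beta}(x_\beta)|\ge \bar\delta$, slightly perturbed if needed. Equicontinuity gives a radius $R_0$, independent of $\beta$, such that $|u_{h,\beta}|>\delta'$ in $B_{R_0}(x_\beta)$, where $\delta<\delta'<\bar\delta$. Since $u_{h,\beta}$ is continuous and nonvanishing, the claim $|u_{h,\beta}|>\delta$ holds with any $R\le R_0$. For $i\ne h$, set $w=|u_{i,\beta}|$ on $B_{R_0}(x_\beta)$. Then
\[
-\Delta w \le \bigl(L_A - \beta a_{ih}(\delta')^{p+1}\bigr) w^p ,
\]
where we simply drop the other competition terms, which are nonpositive. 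This is exactly the setting of the lemma, with $M=\beta\, a_{ih}(\delta')^{p+1}$. Choosing $\bar\beta$ so that $(M-L_A)R_0^2$ exceeds the lemma's threshold, we obtain $u_{i,\beta}\equiv 0$ in $B_{R_0/2}(x_\beta)$ for all $\beta>\bar\beta$. We then take $R=R_0/2$.

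Note that the dead core lemma needs no regularity beyond $H^1\cap L^\infty$. The only place where regularity uniform in $\beta$ is essential is finding a ball of fixed radius where $|u_h|$ stays bounded below, and this is why the H\"older estimate carries the weight of the argument.
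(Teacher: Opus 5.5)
Your proposal is correct and follows the paper's proof. Uniform H\"older bounds on some $\Omega''\Subset\Omega$ give a ball of fixed radius around $x_\beta\in\Omega'$ on which $|u_{h,\beta}|$ stays above $\delta$. Kato's inequality, together with a dead-core comparison lemma built on an explicit radial supersolution vanishing on the concentric ball of half radius, then forces $u_{i,\beta}\equiv0$ there for $i\neq h$ and $\beta$ large. Your profile $c\,(|x|-r/2)_+^{2/(1-p)}$, with the first-order term absorbed through $(|x|-r/2)/|x|\le 1/2$, is a valid variant of the paper's $h\,r^{1-N}z(r/R-1)$.

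The one difference is that the paper does not re-derive the H\"older estimate. It quotes it from \cite[Theorems 1.2, 1.3]{MR3485154}, which the paper states applies directly under these assumptions, so no adaptation of the blow-up argument is needed. Any such adaptation would have to use the full system, since the subsolution inequalities for $|u_{i,\beta}|$ alone cannot give equicontinuity.
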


The proof of this theorem is rather straightforward, and is structured as follows. First, we provide a 
general result, of independent interest, which asserts that solutions of certain sublinear equations have 
dead cores (Proposition \ref{prop:deadcore}); then, we recall a key result from \cite{MR3485154}, which 
states that uniform $L^\infty$ bounds in $\Omega$ imply (in any compactly contained subset) uniform 
H\"older bounds (see Theorem \ref{thm:sttz} ahead). The latter, combined with assumption \ref{i:2}, 
directly implies that the support of $u_h$ contains a ball of fixed radius, while the 
former, together with Kato's inequality, assumption \ref{i:1} and system \eqref{eq:gen_sys}, implies the 
existence of dead cores for every $u_i$, $i\neq h$. We point out that the requirement for $\beta$ to be large is, in general, optimal - see Remark \ref{rmk:sharp_synchro} for more details.

Theorem \ref{thm:main1} holds true with no regularity or boundedness assumptions 
on the open set $\Omega$, as well as without boundary conditions on the solutions. The main 
drawback of this approach is that, while the $L^\infty$ bounds in condition \ref{i:1} are assumed in 
$\Omega$, the persistence condition in assumption \ref{i:2} is needed in a relatively compact 
$\Omega'\Subset \Omega$; consequently, one needs to exclude, along the family, the formation of boundary 
layers associated with vanishing in the interior. Of course, this can be more or less difficult to be 
checked, depending on the further information available about the family of solutions. A possible workaround for this 
issue is to prescribe Dirichlet boundary conditions on the boundary of a bounded, regular
$\Omega$ ($C^{1}$ is enough). Indeed, in such a case, it is sufficient (and easier) to check the persistence condition 
on the whole $\Omega$. For instance, being $\Omega$ bounded, this would follow from $L^q$ bounds from below, $q\ge1$. Accordingly, we have the following alternative version of our main result. 

\begin{theorem}\label{thm:main2} Let  $\Omega\subset\R^N$ ($N\ge 1$) be a bounded regular domain, $a_{ij}=a_{ji}>0$ 
($i\neq j$), $0<p<1$, and assume that $(f_1,\ldots, f_k)$ satisfies \eqref{eq:ass_f} for some $A>0$. 

Let $(u_{\beta})_\beta\subset H^1_{0}(\Omega,\R^k)$ be a family of solutions of \eqref{eq:gen_sys} such that, for every $\beta>0$:
\begin{enumerate}
\item\label{iDir:1} $\|u_\beta\|_{L^\infty(\Omega)}\leq A$;
\item\label{iDir:2} $\|u_{h,\beta}\|_{L^\infty(\Omega)}\geq \bar \delta$, for some $1 \le h \le k$, and $\bar \delta\in (0,A)$.
\end{enumerate}
Then the conclusions of Theorem \ref{thm:main1} hold.
\end{theorem}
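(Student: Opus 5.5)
The plan is to reduce Theorem \ref{thm:main2} to Theorem \ref{thm:main1}. Concretely, I will show that with Dirichlet data the persistence condition on the whole of $\Omega$ forces persistence on a fixed relatively compact subset. The key tool is a uniform H\"older bound up to the boundary: since $\Omega$ is bounded and $C^1$, and $u_\beta\in H^1_0(\Omega)$ with $\|u_\beta\|_{L^\infty(\Omega)}\le A$, I would invoke the global version of the uniform H\"older estimates of \cite{MR3485154} (the boundary counterpart of Theorem \ref{thm:sttz}, valid for Dirichlet conditions on regular domains). This gives $\alpha\in(0,1)$ and $C>0$ independent of $\beta$ with $[u_{h,\beta}]_{C^{0,\alpha}(\overline\Omega)}\le C$. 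The competition terms have the good sign for this argument, and \eqref{eq:ass_f} gives $|f_i(x,u_i)|\le L_A |u_i|^p\le L_A A^p$, so the hypotheses of that result are met. If only the interior estimate is available, I would instead extend $u_\beta$ by zero outside $\Omega$. The extended $|u_{i,\beta}|$ are then subsolutions of $-\Delta w\le L_A A^p$ across $\partial\Omega$, by Kato's inequality and the sign of the interaction term. A barrier argument near the $C^1$ boundary (for instance using an exterior cone or a local flattening) then gives the uniform decay $|u_{i,\beta}(x)|\le C\,\mathrm{dist}(x,\partial\Omega)^{\alpha}$.

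With this decay at hand, set $\eta>0$ small so that $C\eta^\alpha<\bar\delta/2$, and let $\Omega'=\{x\in\Omega:\mathrm{dist}(x,\partial\Omega)>\eta\}$, which is open and relatively compact in $\Omega$. Since $u_{h,\beta}$ vanishes on $\partial\Omega$ and is uniformly $\alpha$-H\"older, every point with $\mathrm{dist}(x,\partial\Omega)\le\eta$ satisfies $|u_{h,\beta}(x)|\le C\eta^\alpha<\bar\delta/2$. Because $u_{h,\beta}$ is continuous with $\|u_{h,\beta}\|_{L^\infty(\Omega)}\ge\bar\delta$, it follows that $\|u_{h,\beta}\|_{L^\infty(\Omega')}\ge\bar\delta$.

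Assumption \ref{i:2} of Theorem \ref{thm:main1} is then satisfied with the same $\bar\delta$ and this $\Omega'$, and assumption \ref{i:1} is identical. Moreover $H^1_0(\Omega)\subset H^1_{\loc}(\Omega)$. Theorem \ref{thm:main1} therefore applies and yields the conclusion for every $\delta<\bar\delta$.

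The main obstacle is the first step: securing H\"older bounds uniform in $\beta$ up to $\partial\Omega$, or at least the uniform boundary decay, under merely $C^1$ regularity of the boundary. I would first try to cite the Dirichlet version in \cite{MR3485154}. Failing that, I would use the zero-extension and subsolution argument, which only needs the nonnegative interaction term to be discarded and a uniform barrier at boundary points.
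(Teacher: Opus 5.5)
Your proposal is correct and follows essentially the paper's argument. The paper also combines the global uniform H\"older bound of Theorem~\ref{thm:sttz} with $u_{h,\beta}=0$ on $\partial\Omega$ to place a ball $B_{2R}(x_\beta)\subset\Omega$ of fixed radius on which $|u_{h,\beta}|>\delta$, and then reruns the proof of Theorem~\ref{thm:main1}; you instead package the same observation as a formal reduction to Theorem~\ref{thm:main1} with $\Omega'=\{x\in\Omega:\mathrm{dist}(x,\partial\Omega)>\eta\}$. Your fallback would also work and needs only the interior H\"older estimates: extend $|u_{h,\beta}|$ by zero, note that it satisfies the $\beta$-independent inequality $-\Delta|u_{h,\beta}|\le L_A A^p$, and use a boundary barrier.
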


Some remarks are in order.
\begin{remark}\label{rmk:2indices}
If assumption \ref{i:2} above holds for two indices $h_1\neq h_2$, then of course the theorems 
apply twice, and every component of the vector solution has a dead core containing a ball with a fixed radius, for $\beta$ large enough.
\end{remark}
\begin{remark}\label{rmk:weak-strong}
 Notice that, by elliptic regularity and assumption \eqref{eq:ass_f}, any weak solution of \eqref{eq:gen_sys} 
belonging to $H^1_\loc(\Omega)\cap L^\infty(\Omega)$ is $C^{1,\alpha}_\loc(\Omega)$, for every $\alpha<1$. On the other hand, the $L^\infty$ bounds can be obtained by suitable growth assumptions on the nonlinearities $f_i$, see Section \ref{sec:applications} ahead.
\end{remark}
\begin{remark}\label{rmk:anyfamily}
Although, for simplicity, we state the theorem for families of solutions indexed on $\beta \in 
(0,+\infty)$, the same result follows also for families with $\beta \in \Bcal$, as long as $+\infty$ is 
an accumulation point of the parameter set $\Bcal$. In particular, to obtain solutions with $\beta$ large and dead cores, 
it is enough to require that assumptions \ref{i:1}, \ref{i:2} hold for some sequence $\beta_n\to+\infty$. 
\end{remark}
\begin{remark}\label{rmk:sharp_synchro}
To obtain dead cores, we require the competition parameter to be large enough. On the other hand, the 
solutions in \cite{MR5032422} have dead cores for every fixed $\beta>0$, as long as each component has a 
sufficiently large number of peaks, sufficiently far from  the origin. Then one may wonder if 
\emph{every} solution with $\beta>0$ has dead cores. This is not the case as, for instance,
\[
(u_\beta,v_\beta)=\left(\beta^{-\frac{1}{2p}},\beta^{-\frac{1}{2p}}\right),\qquad \beta> 0
\] 
solves
\[
\begin{cases}
-\Delta u= u-\beta u^{p} v^{1+p} & \text{in } \Omega, \\ 
-\Delta v= v-\beta v^{p} u^{1+p} & \text{in } \Omega. \\ 
\end{cases}
\]
This also shows that, in general, our persistence condition \ref{i:2} cannot be removed. 
One can build similar examples, in the realm of synchronized solutions, also with Dirichlet boundary conditions. 
\end{remark}

As we already mentioned, the construction of a particular family of solutions with dead cores, for a 
specific instance of \eqref{eq:gen_sys}, was recently provided in \cite{MR5032422} by means of Lyapunov-
Schmidt reduction techniques. As an application of our results, here we are going to show that several 
other families of solutions, known in the literature, satisfy our assumptions and hence exhibit dead cores. In 
particular, this is true for every type of ground state solutions of Schr\"odinger-type systems with 
competition, such as least energy normalized solutions (with fixed masses) and least action 
ones (with fixed frequencies), and for the nodal ground states as well. All these examples are 
characterized by the fact that the families under investigation have a variational characterization, so that 
uniform estimates of the energy/action levels are available. Then, regularity arguments based on 
iteration schemes of Moser/Brezis-Kato type (see Lemmas \ref{lemma:uniform_bounds} and
\ref{lemma:Linfty_critical} ahead) allow to infer uniform $L^\infty$-bounds, thus triggering 
the application of Theorems \ref{thm:main1} and \ref{thm:main2}. On the other hand, also families of 
solutions obtained by Lyapunov-Schmidt reduction, as those in \cite{MR5032422}, may enjoy uniform 
$L^\infty$-bounds, depending on the convergence to suitable limit profiles.

The paper is structured as follows: in Section \ref{sec:applications} we illustrate the above mentioned 
applications, based on the $L^\infty$-bounds provided by Lemmas \ref{lemma:uniform_bounds} and
\ref{lemma:Linfty_critical} therein; Section \ref{sec:proof_of_thms} contains the proof of Theorems \ref{thm:main1} and
\ref{thm:main2}, while Section \ref{sec:proof_of_lemmas} is devoted to that of Lemmas \ref{lemma:uniform_bounds} and
\ref{lemma:Linfty_critical}.

\section{Applications}\label{sec:applications}

In this section we provide some applications of our main results, to support our claim that the formation of dead 
cores is the typical phenomenon, for solutions of \eqref{eq:gen_sys} with $0<p<1$, when $\beta$ is large enough. More 
precisely, we will describe in detail an example, under specific choices of the nonlinearities $f_i$ and of the 
families of solutions $(u_\beta)_\beta$. We also mention some papers in the literature to which one may 
apply such result or suitable variations. It will be clear that our arguments can be 
adjusted to deal also with different cases.

The common idea behind these applications is to consider families of solutions with a variational characterization, 
and to translate uniform (in $\beta$) estimates of the energy/action levels into uniform $L^\infty$ ones. To this 
aim, we first state two results, concerning uniform $L^\infty$ bounds of nonnegative subsolutions of certain 
equations, with either subcritical or critical growth. While these results are more or less standard, we did not 
find a reference which would fit directly in our purposes. For that reason, for completeness, we present their 
proofs in Section \ref{sec:proof_of_lemmas}. For concreteness, apart for some remarks, we bound ourselves to the case when 
\eqref{eq:gen_sys} is complemented with homogeneous Dirichlet boundary conditions, in dimension $N\ge 3$. 

\begin{lemma}[$L^\infty$ bounds: subcritical case]\label{lemma:uniform_bounds}
Let $\Omega$ be an open subset of $\R^N$, with $N\geq 3$. Take $a>0$, $M>0$, $q\in (1,2^*-1)$, and let $u\in H^1_0(\Omega)$ be a solution to 
\[
\begin{cases}
-\Delta u \leq a(u+u^{q}) &\text{ in } \Omega,\\
u\geq 0 & \text{ in } \Omega,
\end{cases}
\]
satisfying $\|u\|_{H^1(\Omega)}\leq M$. Then there exists $C=C(a,M,N,q)>0$ such that 
\[
\|u\|_{\infty} \leq C.
\]
\end{lemma}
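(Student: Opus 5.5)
We will run a Brezis--Kato argument followed by a Moser iteration. Since $u\ge0$, write the inequality as $-\Delta u\le V u$ in $\Omega$, with $V:=a(1+u^{q-1})$. Because $q<2^*-1$, we have $q-1<2^*-2=4/(N-2)$, so $u^{q-1}\in L^{r}$ with $r=2^*/(q-1)>N/2$. By Sobolev, $\|u\|_{2^*}\le S^{-1/2}\|\nabla u\|_2\le CM$, hence $\|u^{q-1}\|_{L^{r}(\Omega)}\le C(M,N,q)$. The obstacle is that $\Omega$ may be unbounded, so the constant part $a$ of $V$ is not in $L^r(\Omega)$. We handle it separately, using only that $u\in L^2$ with $\|u\|_2\le M$.

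\textbf{Iteration step.} For $s\ge 1$ and $T>0$, we test with $\varphi=u\min(u,T)^{2(s-1)}$, which belongs to $H^1_0(\Omega)$ and is nonnegative. This is legitimate since the inequality holds in the weak sense against nonnegative $H^1_0$ test functions. Setting $w=u\min(u,T)^{s-1}$, the standard computation gives
\[
\int|\nabla w|^2\le C s\int V u^2\min(u,T)^{2(s-1)} = C s\Big(a\int w^2 + a\int u^{q-1}w^2\Big).
\]
The last term is controlled by Hölder with $r>N/2$ and interpolation:
\[
\int u^{q-1}w^2\le \|u^{q-1}\|_{r}\,\|w\|_{2r'}^2,\qquad 2r'<2^*.
\]
Interpolating between $L^2$ and $L^{2^*}$, this yields
\[
\|w\|_{2r'}^2\le \varepsilon\|w\|_{2^*}^2+C_\varepsilon\|w\|_2^2\le \varepsilon S^{-1}\|\nabla w\|_2^2 + C_\varepsilon\|w\|_2^2 .
\]
Choosing $\varepsilon\sim 1/s$ absorbs the gradient term, giving
\[
\|w\|_{2^*}^2\le C\,s^{\gamma}\,\|w\|_2^2
\]
with constants depending only on $a,M,N,q$. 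Here $\gamma$ is explicit, coming from $C_\varepsilon\sim\varepsilon^{-\theta/(1-\theta)}$.

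\textbf{Passage to the limit and iteration.} Let $T\to\infty$ by monotone convergence. This is valid provided $u\in L^{2s}$, which holds inductively starting from $s=1$. We obtain
\[
\|u\|_{2^*s}\le (C s^{\gamma})^{1/(2s)}\|u\|_{2s}.
\]
Iterating with $s_n=(2^*/2)^n$ gives a convergent product $\prod_n (C s_n^\gamma)^{1/(2s_n)}<\infty$. Hence
\[
\|u\|_\infty\le C\|u\|_2\le C(a,M,N,q).
\]
Note that the $L^2$ norm is used at every step precisely because of the linear term $au$ on a possibly unbounded domain. No measure of $\Omega$ ever enters.

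\textbf{Main obstacle.} The delicate point is making the absorption uniform. We need the control of the $u^{q-1}$ part to depend only on $\|u^{q-1}\|_{L^r}$, which is bounded by $M$, rather than on finer integrability of $u$, while still tracking the polynomial dependence on $s$. If this bookkeeping becomes awkward, the fallback is to first do a single Brezis--Kato step, which shows $u\in L^{t}$ for all $t<\infty$ with bounds depending on $M$. Then $V\in L^{r'}$ for some $r'$ larger than $N/2$ by a margin, and the Moser iteration becomes routine.
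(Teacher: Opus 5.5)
Your proof is correct, and it takes a genuinely different route from the paper's. You linearize the nonlinearity as $-\Delta u\le Vu$ with $V=a(1+u^{q-1})$. Since $\|u^{q-1}\|_{L^{2^*/(q-1)}}=\|u\|_{2^*}^{q-1}\le C(M,N,q)$ and $2^*/(q-1)>N/2$, the absorption constant does not depend on the iteration step. Taking $\varepsilon\sim 1/s$ then costs only a polynomial factor $s^{\gamma}$ with $\gamma=1/(1-\theta)$. This lets you run a full Moser iteration from $L^2$ to $L^\infty$ with a convergent product, ending in $\|u\|_\infty\le C(a,M,N,q)\|u\|_2$. The truncated test functions $u\min(u,T)^{2(s-1)}$ make the argument self-contained. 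The constant part of $V$ is paid for by the global $L^2$ norm, so $|\Omega|$ never enters.

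The paper proceeds differently. It first cites Struwe's Lemma B.3 and Gilbarg--Trudinger Theorem 8.17 to know qualitatively that $u\in L^\infty$, so that $u^{2s+1}$ is an admissible test function without truncation. It then treats $u^q$ directly at each step, by H\"older interpolation between $L^{2(s+1)}$ and $L^{2^*(s+1)}$ followed by Young. This produces step-dependent constants $\kappa_s$ and superlinear powers $\sigma_s>1$, which are harmless for finitely many steps but cannot be iterated to infinity. So the paper stops once $u+u^q\in L^{t_k/2}$ with $t_k>N$, and concludes with the local maximum principle of Gilbarg--Trudinger Theorem 8.17 on unit balls, uniformly in the centre. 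That uniformity is what gives a global bound on a possibly unbounded $\Omega$.

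Your version is more elementary and gives an explicit linear dependence on $\|u\|_2$. The paper's is shorter given the references, and its Step 1 carries over to Lemma \ref{lemma:Linfty_critical}. There $u^{2^*-2}$ lies only in the borderline space $L^{N/2}$, so your uniform $L^r$ control with $r>N/2$ is unavailable, and a smallness splitting via $u_\infty$ is needed instead. Finally, the hedging in your last paragraph is unnecessary: the bookkeeping you describe already closes, and no preliminary Brezis--Kato step is required.
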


\begin{lemma}[$L^\infty$ bounds: critical case]\label{lemma:Linfty_critical}
Let $\Omega$ be an open subset of $\R^N$, with $N\geq 3$. Take $a>0$, and let $(u_n)_n\subset  H^1_0(\Omega)$ be a family of solutions satisfying
\[
\begin{cases}
-\Delta u_n \leq a(u_n+u_n^{2^*-1}) &\text{ in } \Omega,\\
u_n\geq 0 & \text{ in } \Omega.
\end{cases}
\]
Assume also the existence of $u_\infty \in H^1_0(\Omega)$ such that $u_n\to u_\infty$ in $L^{2^*}(\Omega)$. Then there exists $C=C(a,N,u_\infty)>0$ and $\bar n$ such that 
\[
\|u_n\|_{\infty} \leq C(\|u_n\|_{2}+\|u_n\|_{2^*}^{2^*}) \quad \text{ for every } n\geq \bar n.
\]
\end{lemma}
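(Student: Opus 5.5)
The argument is a Brezis--Kato type absorption of the critical term, followed by a Moser iteration. The only place where the convergence $u_n\to u_\infty$ in $L^{2^*}(\Omega)$ enters is the \emph{uniform} equi-integrability of $u_n^{2^*-2}$ in $L^{N/2}$, which lets us absorb the critical potential in one step.

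\textbf{Setup.} We rewrite the inequality as
\[
-\Delta u_n\le a\bigl(1+V_n\bigr)u_n,\qquad V_n:=u_n^{2^*-2}\in L^{N/2}(\Omega).
\]
Since $u_n\to u_\infty$ in $L^{2^*}$, the family $\{|u_n|^{2^*}\}$ is uniformly integrable for $n$ large.

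\textbf{Splitting of the potential.} Fix $\varepsilon>0$ to be chosen. There are $K=K(\varepsilon,u_\infty)$ and $\bar n$ such that $\|V_n\chi_{\{u_n>K\}}\|_{N/2}\le\varepsilon$ for all $n\ge\bar n$. To see this, note that $\|u_n-u_\infty\|_{2^*}$ is small for large $n$, and that $\int_{\{u_\infty>K/2\}}u_\infty^{2^*}$ is small for large $K$. On the set $\{u_n>K\}$ we have either $u_\infty>K/2$ or $u_n\le 2|u_n-u_\infty|$; both contributions are controlled. On the complement, $V_n\le K^{2^*-2}$. Hence
\[
-\Delta u_n\le a\bigl(1+K^{2^*-2}\bigr)u_n+aW_n u_n,\qquad \|W_n\|_{N/2}\le\varepsilon .
\]

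\textbf{Moser iteration.} Test with $u_n\min(u_n,T)^{2(\gamma-1)}$, $\gamma\ge1$, and let $T\to\infty$ at the end. Set $w=u_n\min(u_n,T)^{\gamma-1}$. The standard computation gives
\[
\int|\nabla w|^2\le C\gamma\Bigl(\int (1+K^{2^*-2})w^2+\int W_nw^2\Bigr).
\]
By H\"older and Sobolev, $\int W_nw^2\le\varepsilon S^{-1}\|\nabla w\|_2^2$.

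For a \emph{first step} with a fixed exponent $\gamma_0=2^*/2$, we choose $\varepsilon$ so small that $C\gamma_0\varepsilon S^{-1}<1/2$ and absorb this term. This yields
\[
\|u_n\|_{L^{2^*\gamma_0}}\le C\,\|u_n\|_{L^{2^*}}^{\cdots}\cdot(\cdots),
\]
so that $u_n\in L^{r}$ for some $r>2^*$, uniformly in $n$, with norm controlled by $\|u_n\|_2$ and $\|u_n\|_{2^*}$. The right-hand side is finite because $u_n\in L^{2^*}$.

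Once $u_n\in L^r$ uniformly with $r>2^*$, the potential satisfies $V_n\in L^{s}$ with $s=r/(2^*-2)>N/2$. Standard Moser iteration (or the Trudinger/Stampacchia $L^\infty$ estimate for $-\Delta u\le c(x)u$ with $c\in L^s$, $s>N/2$) then gives
\[
\|u_n\|_\infty\le C\bigl(1+\|V_n\|_{s}\bigr)^{\theta}\|u_n\|_2 .
\]

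\textbf{Bookkeeping.} It remains to arrange the final estimate in the linear-plus-critical form $C(\|u_n\|_2+\|u_n\|_{2^*}^{2^*})$. The $L^r$ bound from the first step behaves like $\|u_n\|_{2^*}$ times a constant depending on $K$ and $a$. Using Young's inequality together with the uniform boundedness of $\|u_n\|_{2^*}$ (the sequence converges), we obtain $\|u_n\|_\infty\le C(\|u_n\|_2+\|u_n\|_{2^*}^{2^*})$ with $C$ depending only on $a,N$ and, through $K$, on $u_\infty$.

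\textbf{Main obstacle.} The key difficulty is making the absorption constant uniform in $n$. This is exactly the uniform smallness of the tail $\|V_n\chi_{\{u_n>K\}}\|_{N/2}$, and it is why $L^{2^*}$ convergence is needed rather than mere boundedness, which would allow concentration. Everything else is standard, with constants tracked carefully, including an interpolation if needed to pass from $\|u_n\|_{2^*}$ to the stated form.
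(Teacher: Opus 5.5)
Your argument is correct and is essentially the paper's. Both proofs run a Brezis--Kato absorption of the critical potential $u_n^{2^*-2}$, made uniform in $n$ by the $L^{2^*}$ convergence, followed by a Moser iteration. The paper splits via $u_n^{2^*-2}\le\kappa_N(|u_n-u_\infty|^{2^*-2}+u_\infty^{2^*-2})$ and the level set $\{u_\infty>\eta\}$; this is the same uniform-integrability fact you use on $\{u_n>K\}$.

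The differences are only in execution:
\begin{itemize}
\item The paper re-absorbs at each of finitely many iteration steps. It then applies Gilbarg--Trudinger's Theorem 8.17, treating $a(u_n+u_n^{2^*-1})$ as a forcing term.
\item You absorb once, using truncated test functions, so no a priori integrability beyond $L^{2^*}$ is needed. You then apply the linear estimate for potentials $c\in L^\infty+L^{s}$ with $s>N/2$.
\end{itemize}
Your route actually gives the stronger bound $\|u_n\|_\infty\le C\|u_n\|_2$ for $n\ge\bar n$. That implies the stated estimate directly, so your final bookkeeping paragraph is unnecessary.
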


We point out that, in the two previous lemma, one can take $\Omega=\R^N$.
\begin{remark}\label{rmk:Linfty_bds_w/out_boundary}
Assume $u\geq 0$ solves $-\Delta u\leq C(u+u^q)$ in $\Omega$, with $q< 2^*-1$, and no information about $u$ on $\partial 
\Omega$ is known. For every $\Omega'\Subset \Omega$, we can take a nonnegative cutoff function $\varphi\in 
C^\infty(\Omega)$ with $\varphi=1$ in $\Omega'$, and we could then repeat the arguments of the proofs of Lemmas 
\ref{lemma:uniform_bounds} with the only difference therein being that, at the beginning, we should use  
$u^{2s+1}\varphi^2$ instead of $u^{2s+1}$ as test function (see the proof of \cite[Lemma B.3]{Struwe_book} 
for the details). This yields a uniform bound of $\|u\|_{L^\infty(\Omega')}$, exactly as before. An analogous remark can be done regarding Lemma \ref{lemma:Linfty_critical}.
\end{remark}

In the following, we focus on a version of \eqref{eq:gen_sys} with fixed nonlinearities of polynomial 
type on a smooth bounded  domain $\Omega\subset \R^N$. For $0<p\leq 2^*/2-1$, let us consider the system
\begin{equation}\label{eq:sys_action}
\begin{cases}
-\Delta u+\lambda_1 u=\mu_1 |u|^{2p}u-\beta |u|^{p-1}u |v|^{p+1} & \text{in }\Omega, \\ 
-\Delta v+\lambda_2 v=\mu_2 |v|^{2 p}v-\beta |v|^{p-1}v |u|^{p+1} & \text{in }\Omega, \\ 
u=v=0 \text { on } \partial \Omega, & \end{cases}
\end{equation}
where $\mu_1,\mu_2>0$, $-\lambda_1(\Omega)<\lambda_1,\lambda_2$ if $p<2^*/2-1$, and 
$-\lambda_1(\Omega)<\lambda_1,\lambda_2<0$ if $p=2^*/2-1$, whose solutions are critical points in $
H^1_0(\Omega)$ of the action functional
\begin{multline*}
J_\beta(u,v):=\frac12\int_\Omega(|\nabla u|^2 + \lambda_1 u^2 +|\nabla v|^2 + \lambda_2 v^2 )\\
-\frac1{2p+2}\int_\Omega(\mu_1| u|^{2p+2} + \mu_2| v|^{2p+2} )
+\frac\beta{p+1}\int_\Omega|u|^{p+1}| v|^{p+1}.
\end{multline*}

\begin{proposition}\label{prop:action}
Let $(u_\beta,v_\beta)_\beta$  be a family of fully nontrivial solutions of \eqref{eq:sys_action} 
such that, for a constant $C>0$ independent of $\beta$ and for every $\beta\ge1$,
\begin{equation}\label{eq:bound_J_beta_ind}
J_\beta(u_\beta,v_\beta)\le C.
\end{equation}
Assume moreover that, when $p=2^*/2-1$, $(u_\beta,v_\beta)\to(u_\infty,v_\infty)
$, strongly in $H^1$, as $\beta\to+\infty$ (possibly up to subsequences). 

Then $(u_\beta,v_\beta)_\beta$ satisfies assumptions \ref{iDir:1}, \ref{iDir:2} in Theorem \ref{thm:main2} 
(possibly up to subsequences) and, in particular, both $u_\beta$ and $v_\beta$ have a dead core for $\beta$ large enough.
\end{proposition}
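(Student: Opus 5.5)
The plan is to verify the two hypotheses of Theorem \ref{thm:main2} (with $k=2$, $f_i(x,s)=\mu_i|s|^{2p}s-\lambda_i s$, which satisfies \eqref{eq:ass_f} for every $A$ because $|f_i(x,s)|\le (\mu_i A^{p}+|\lambda_i|A^{1-p})|s|^p$ for $|s|\le A$), and then apply it twice, with $h=1$ and $h=2$, as in Remark \ref{rmk:2indices}.

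\textbf{Step 1: uniform $H^1$ bounds.} Testing the first equation with $u_\beta$ and the second with $v_\beta$, and adding, gives the Nehari identity
\[
\|u_\beta\|_{\lambda_1}^2+\|v_\beta\|_{\lambda_2}^2+2\beta\int_\Omega|u_\beta|^{p+1}|v_\beta|^{p+1}=\int_\Omega\big(\mu_1|u_\beta|^{2p+2}+\mu_2|v_\beta|^{2p+2}\big),
\]
where $\|w\|_{\lambda}^2=\int(|\nabla w|^2+\lambda w^2)$ is an equivalent norm because $\lambda_i>-\lambda_1(\Omega)$. Combining this with $J_\beta\le C$ yields
\[
J_\beta=\Big(\tfrac12-\tfrac1{2p+2}\Big)\big(\|u_\beta\|^2_{\lambda_1}+\|v_\beta\|^2_{\lambda_2}\big)+\Big(\tfrac{1}{p+1}-\tfrac{2}{2p+2}\Big)\beta\int|u_\beta v_\beta|^{p+1}.
\]
The last coefficient vanishes, so $\|u_\beta\|_{H^1}+\|v_\beta\|_{H^1}\le M$ uniformly in $\beta$.

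\textbf{Step 2: uniform $L^\infty$ bounds (assumption \ref{iDir:1}).} By Kato's inequality, $-\Delta|u_\beta|\le \mathrm{sign}(u_\beta)(-\Delta u_\beta)\le \mu_1|u_\beta|^{2p+1}+|\lambda_1||u_\beta|$, since the competition term has the sign of $u_\beta$ and can be dropped. Moreover $|u_\beta|\in H^1_0$ with the same norm. In the subcritical case $2p+1<2^*-1$, Lemma \ref{lemma:uniform_bounds} with $q=2p+1$ gives $\|u_\beta\|_\infty\le C$, and likewise for $v_\beta$. In the critical case we use the assumed strong $H^1$ convergence: $|u_\beta|\to|u_\infty|$ in $L^{2^*}$ by Sobolev, so Lemma \ref{lemma:Linfty_critical} applied along a sequence $\beta_n\to\infty$ gives $\|u_{\beta_n}\|_\infty\le C(\|u_{\beta_n}\|_2+\|u_{\beta_n}\|_{2^*}^{2^*})\le C'$. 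Taking $A$ as this uniform bound, \eqref{eq:ass_f} holds; by Remark \ref{rmk:anyfamily} a sequence suffices.

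\textbf{Step 3: persistence (assumption \ref{iDir:2}) for both components.} This is the main point. Testing only the $u$-equation with $u_\beta$ and dropping the nonnegative competition term gives
\[
\|u_\beta\|^2_{\lambda_1}\le \mu_1\int|u_\beta|^{2p+2}\le C\|u_\beta\|_{\lambda_1}^{2p+2}
\]
by Sobolev, since $2p+2\le 2^*$. Because $u_\beta\not\equiv0$ (the solutions are fully nontrivial) and $2p+2>2$, this forces $\|u_\beta\|_{\lambda_1}\ge c>0$, and hence $\int|u_\beta|^{2p+2}\ge c'>0$. To turn this into an $L^\infty$ lower bound we use $\Omega$ bounded: $c'\le \int|u_\beta|^{2p+2}\le \|u_\beta\|_\infty^{2p+2}|\Omega|$, so $\|u_\beta\|_\infty\ge\bar\delta>0$. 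The same argument applies to $v_\beta$. Shrinking $\bar\delta$ if necessary ensures $\bar\delta<A$.

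The only delicate point is the critical case, where Lemma \ref{lemma:Linfty_critical} needs the compactness hypothesis in order to exclude concentration; this is why the assumption of strong convergence is required there. With both hypotheses verified for $h=1$ and $h=2$, Theorem \ref{thm:main2} together with Remark \ref{rmk:2indices} gives dead cores for both $u_\beta$ and $v_\beta$ when $\beta$ is large.
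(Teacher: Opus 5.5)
Your proof is correct and follows the paper's argument essentially step by step. You use the Nehari identities together with the energy bound to get uniform $H^1_0$ bounds, then Kato's inequality combined with Lemma \ref{lemma:uniform_bounds} or Lemma \ref{lemma:Linfty_critical} (applied to $|u_\beta|$, $|v_\beta|$) for \ref{iDir:1}, and finally the single-component Nehari inequality plus Sobolev and the boundedness of $\Omega$ for \ref{iDir:2}. There is one harmless slip in your check of \eqref{eq:ass_f}: the constant should be $\mu_i A^{p+1}+|\lambda_i|A^{1-p}$, since $|s|^{2p+1}=|s|^p|s|^{p+1}$.
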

\begin{proof}
Testing the equations with $u_\beta$, $v_\beta$, we obtain the validity of the Nehari-type conditions
\[
\begin{split}
\int_\Omega\mu_1| u|^{2p+2} = 
\int_\Omega(|\nabla u|^2 + \lambda_1 u^2 + \beta|u|^{p+1}| v|^{p+1} ),\\
\int_\Omega\mu_2| v|^{2p+2} = 
\int_\Omega(|\nabla v|^2 + \lambda_2 v^2 + \beta|u|^{p+1}| v|^{p+1} ).
\end{split}
\]
Substituting into \eqref{eq:bound_J_beta_ind} one immediately infers that $(u_\beta,v_\beta)_\beta$ is uniformly 
bounded in $H^1_0(\Omega)$. Moreover, using Kato's inequality (see e.g. 
\cite[Corollary 1.2]{MR2056467}) we obtain
\[
-\Delta |u_{\beta}| \le -\lambda_1 |u_{\beta}| + \mu_1|u_{\beta}|^{2p+1} -\beta |u_{\beta}|^p |v_{\beta}|^{p+1}
\le  |\lambda_1| |u_{\beta}| + \mu_1|u_{\beta}|^{2p+1},
\]
and a similar inequality holds for $v_\beta$. Then, we can apply either Lemma \ref{lemma:uniform_bounds} or 
Lemma \ref{lemma:Linfty_critical}, to both $|u_\beta|$ and $|v_\beta|$, obtaining that assumption \ref{iDir:1} 
in Theorem \ref{thm:main2} is satisfied.

To check \ref{iDir:2}, we use the Sobolev embedding and the Nehari-type identity to write
\[
\|u_\beta\|_{L^{2p+2}}^{2} \le C \|u_\beta\|_{H^1}^{2} \le C' \|u_\beta\|_{L^{2p+2}}^{2p+2},
\]
whence, recalling that $(u_\beta,v_\beta)$ is fully nontrivial and $\Omega$ is bounded,
\[
0<C''\le \|u_\beta\|_{L^{2p+2}}^{2p} \le C'''\|u_\beta\|_{L^{\infty}}^{2p},
\]
and a similar inequality holds for $v_\beta$.
\end{proof}

Thanks to this result, the existence of solutions with dead cores for \eqref{eq:sys_action}, for $\beta$ large, 
boils down to the construction of families of solutions satisfying \eqref{eq:bound_J_beta_ind}. Notice that this is 
always true for solutions which come from a minimax principle associated with $J_\beta$, as long as competitors $(u,v)$ 
with $u\cdot v\equiv 0$ are admissible. In particular, this is true for (fully nontrivial) ground states and 
least energy sign-changing solutions, whenever they exist. Consequently, Proposition \ref{prop:action} 
(or small variants) applies to the solutions obtained in several papers concerning the Sobolev subcritical case, ranging 
for instance from  \cite{ContiTerraciniVerzini_AIHP_2002} to \cite{ClappSzulkin_NoDEA_2019}, but also in the 
more delicate Sobolev critical case: we refer to several different kinds of least energy solutions, 
either nonnegative or sign-changing, found for instance in  \cite[Theorem 1.3]{ChenZou_CalcVar_2015}, 
\cite[Theorem 1.2]{ChenLinZou_CPDE_2014} and \cite[Theorem 1.3]{TavaresYouZou_JFA_2022}. In these 
examples, the further condition concerning the strong $H^1$ convergence 
$(u_\beta,v_\beta)\to(u_\infty,v_\infty)$, as $\beta\to+\infty$, is a consequence e.g. of 
\cite[Theorem 1.7]{TavaresYouZou_JFA_2022}.
\begin{remark}[Systems on $\R^N$, minimax solutions]\label{rmk:unbdd_domains}
The ideas beyond Proposition \ref{prop:action} can be adapted also to systems in the whole space, and/or to families of solutions not necessarily of least energy type. As an example, in \cite{ClappPistoia_CalcVar_2018}, the authors deal with the following system 
\[
\begin{cases}
-\Delta u=\mu_1|u|^{2^*-2} u+\lambda \alpha|u|^{\alpha-2}|v|^\beta u \\
-\Delta v=\mu_2|v|^{2^*-2} v+\lambda \beta|u|^\alpha|v|^{\beta-2} v \\
u, v \in D^{1,2}\left(\mathbb{R}^N\right),
\end{cases}
\]
with $\mu_1,\mu_2,\alpha-1,\beta-1>0$ and $\alpha+\beta=2^*$. For $\Gamma$  a closed subgroup of $O(N+1)$, 
the group of linear isometries of $\mathbb{R}^{N+1}$, the authors show existence of families of solutions 
which are invariant under the conformal action of $\Gamma$ on $\mathbb{R}^N$ induced by the stereographic 
projection $\sigma: \mathbb{S}^N \rightarrow \mathbb{R}^N \cup\{\infty\}$; such solutions are of minimax 
type for $J_\beta$, or, equivalently, minimizers over suitable natural constraints (see 
\cite{ClappPistoia_CalcVar_2018} for more details). Accordingly, uniform bounds on the level of such 
solutions hold true, and, by Lemma \ref{lemma:Linfty_critical}, we infer uniform bounds in $L^\infty(\R^N)$. On the other 
hand, both the persistence condition \ref{i:2} in Theorem \ref{thm:main1}, on suitable 
$\Omega'\Subset\Omega$, and the strong $H^1$ convergence are a consequence of 
\cite[Theorem 1.2]{ClappPistoia_CalcVar_2018}.
\end{remark}
\begin{remark}[Normalized solutions]\label{rmk:normalized}
A result similar to Proposition \ref{prop:action} can be obtained also in the frame of normalized 
solutions, i.e. when \eqref{eq:sys_action} is complemented with the further conditions
\[
\int_\Omega u^{2} = \rho_1>0, \qquad \int_\Omega v^{2} = \rho_2>0,
\]
and $\lambda_i = \lambda_{i,\beta}$ therein are (unknown) Lagrange multipliers. Indeed, in this case,
instead of \eqref{eq:bound_J_beta_ind}, one has to require uniform bounds on the multipliers and on 
the energy
\[
E_\beta(u,v):=\frac12\int_\Omega(|\nabla u|^2  +|\nabla v|^2 )
-\frac1{2p+2}\int_\Omega(\mu_1| u|^{2p+2} + \mu_2| v|^{2p+2} )
+\frac\beta{p+1}\int_\Omega|u|^{p+1}| v|^{p+1}.
\]
Then $L^\infty$ bounds follow similarly as in Proposition \ref{prop:action}, while the persistence
condition is a direct consequence of the $L^2$ normalization. This can be applied, for instance, to 
the global (in the mass subcritical case) or local (in the mass supercritical one) minimizers obtained 
in \cite{MR3918087}, up to the Sobolev critical case (see \cite[Theorem 1.12, Lemma 5.1]{MR3918087} 
for more details). 
\end{remark}

\section{Proof of Theorems \ref{thm:main1} and \ref{thm:main2}}\label{sec:proof_of_thms}

In this section, we provide a proof of our main results. We start with a general result that states that nonnegative subsolutions of a certain sublinear problem have necessarily dead cores. We use the notation $B_{r}=B_r(0)$ to denote the open ball of radius $r$ centered at the origin. Clearly, via a suitable translation, all results hold also for balls centered at other given points.
\begin{proposition}\label{prop:deadcore}
Let $0<p<1$ and assume that, for some $M >0$ and $A>0$, $u\in H^1(B_{2R})$ satisfies (in weak sense)
\[
\begin{cases}
-\Delta u \leq -M u^p \quad &\text{in } B_{2R},\\
u\geq 0 \quad &\text{in } B_{2R},\\
u \leq A  \quad &\text{on } \partial B_{2R}.
\end{cases}
\]
Then
\[
M \geq \frac{(1+p)2^{N(1-p)+p}}{(1-p)^2}\cdot\frac{A^{1-p}}{R^2} 
\qquad\implies\qquad 
u\equiv 0 \ \text{in } B_{R}.
\]
\end{proposition}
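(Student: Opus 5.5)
The plan is a comparison argument with an explicit radial supersolution that vanishes identically on $B_R$ and dominates $A$ on $\partial B_{2R}$. For $\alpha=\frac{2}{1-p}$ and a constant $c>0$ to be chosen, set
\[
w(x)=c\,\bigl((|x|-R)^+\bigr)^{\alpha}.
\]
Since $\alpha>2$, $w\in C^{2}(B_{2R})$ and $w\equiv0$ on $\overline{B_R}$. For $r=|x|>R$ and $s=r-R$,
\[
\Delta w = c\alpha(\alpha-1)s^{\alpha-2}+c\alpha\frac{N-1}{r}s^{\alpha-1}.
\]
Because $s\le r$, this gives $\Delta w\le c\alpha(\alpha-1+N-1)s^{\alpha-2}$. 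Since $\alpha-2=\alpha p$, we have $s^{\alpha-2}=c^{-p}w^p$. Hence $-\Delta w\ge -Mw^p$ holds in $B_{2R}$ provided
\[
c^{1-p}\alpha(\alpha+N-2)\le M .
\]
The boundary requirement $w\ge A$ on $\partial B_{2R}$ reads $cR^\alpha\ge A$. So I take $c=AR^{-\alpha}$, which turns the condition into $M\ge \alpha(\alpha+N-2)A^{1-p}R^{-2}$.

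With $\alpha=\frac{2}{1-p}$ one gets $\alpha(\alpha+N-2)=\frac{2(2+(N-2)(1-p))}{(1-p)^2}$. I would check that this is at most the stated constant $\frac{(1+p)2^{N(1-p)+p}}{(1-p)^2}$, or else adjust the exponent and the barrier. The stated constant is suggestive: $2^{N(1-p)}$ looks like it comes from a rescaling or volume factor, and $(1+p)$ from an energy (test function) method. That hints the authors use a test-function argument rather than a pure barrier. I would still try the barrier first. If the constants do not match exactly, I would use the freedom in the shape of $w$ (for instance $c((|x|^2-R^2)^+)^{\alpha/2}$-type profiles, or centering at $2R$ differently) to land below their threshold.

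The main step is the comparison principle for the sublinear operator $-\Delta u+Mu^p$ with weak $H^1$ solutions. Here $u$ is a subsolution and $w$ a supersolution of $-\Delta z+Mz^p\le/\ge 0$, with $u\le w$ on $\partial B_{2R}$, in the $H^1$ trace sense, i.e. $(u-w)^+\in H^1_0(B_{2R})$. I test both inequalities with $\varphi=(u-w)^+\in H^1_0(B_{2R})$, $\varphi\ge0$, and subtract:
\[
\int|\nabla (u-w)^+|^2\le -M\int (u^p-w^p)(u-w)^+\le 0,
\]
because $s\mapsto s^p$ is nondecreasing on $[0,\infty)$. So $(u-w)^+$ is constant, hence zero, and $0\le u\le w=0$ in $B_R$.

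The expected obstacles are twofold. The first is justifying that the boundary inequality holds in the trace sense; this is fine since $w$ is smooth and $w\ge A\ge u$ on the boundary. The second, and more delicate, is matching the precise numerical constant. The comparison itself is clean because monotonicity of $s^p$ suffices and no Lipschitz property of $s^p$ at $0$ is needed.
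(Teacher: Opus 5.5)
Your argument is correct and is essentially the paper's proof. The paper uses the same comparison lemma: it tests with $(u-v)^+$ and uses only the monotonicity of $s\mapsto s^p$. It also uses the same profile $(r-R)^{2/(1-p)}$. The one difference is that its barrier carries the extra weight $(2R/r)^{N-1}$, namely $v=A(2R)^{N-1}r^{1-N}\bigl(\tfrac{r}{R}-1\bigr)^{2/(1-p)}$. That weight makes the first-order part of $-\Delta v$ nonnegative, via $z'\ge z$. The price is the factor $\sup_{[R,2R]}(2R/r)^{(N-1)(1-p)}=2^{(N-1)(1-p)}$. So the stated constant is just $\alpha(\alpha-1)\,2^{(N-1)(1-p)}$ with $\alpha(\alpha-1)=\frac{2(1+p)}{(1-p)^2}$, and no energy method is involved.

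The one real gap is the step you left open: comparing your threshold with the stated one. It does hold, so no adjustment of the barrier is needed, but you must verify it, or the stated implication is not proved. Your condition $M\ge\alpha(\alpha+N-2)A^{1-p}R^{-2}$ is implied by the stated one because
\[
\alpha(\alpha+N-2)\le \frac{(1+p)2^{N(1-p)+p}}{(1-p)^2}
\iff
2+(N-2)(1-p)\le (1+p)\,2^{(N-1)(1-p)} .
\]
For $N=1$ both sides equal $1+p$. For $N\ge2$, note that $\ln(1+p)+(1-p)\ln 2$ is concave in $p$ and equals $\ln 2$ at $p=0$ and $p=1$. Hence $(1+p)2^{1-p}\ge 2$ on $[0,1]$, and
\[
(1+p)\,2^{(N-1)(1-p)}\ge 2\cdot 2^{(N-2)(1-p)}\ge 2+2\ln 2\,(N-2)(1-p)\ge 2+(N-2)(1-p).
\]
So your unweighted barrier gives a threshold growing linearly in $N$ rather than exponentially. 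It is strictly smaller than the paper's for $N\ge2$ and equal to it for $N=1$.
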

To prove the proposition, we use the following comparison principle.
\begin{lemma}\label{lem:comparison}
Let $0<p<1$ and assume that $M>0$ and that $u,v\in H^1(B_{2R})$ satisfy (in the weak sense)
\[
\begin{cases}
- \Delta u \leq - M u^{p} \quad &\text{in } B_{2R},\\
- \Delta v \geq -M v^{p} \quad &\text{in } B_{2R},\\
u,v \geq 0 \quad &\text{in } B_{2R},\\
u \leq v \quad &\text{on } \partial B_{2R}.
\end{cases}
\]
Then
\[
u \leq v \qquad \text{in } B_{2R}.
\]
\end{lemma}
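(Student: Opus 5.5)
Subtract the two inequalities and test with the positive part $w:=(u-v)^+$. Since $u\le v$ on $\partial B_{2R}$ (in the trace sense), we have $w\in H^1_0(B_{2R})$ and $w\ge 0$, so $w$ is an admissible nonnegative test function in both weak inequalities. Testing the inequality for $u$ with $w$ and the one for $v$ with $w$, then subtracting, gives
\[
\int_{B_{2R}} \nabla(u-v)\cdot\nabla w \;\le\; -M\int_{B_{2R}} \bigl(u^p - v^p\bigr)\, w .
\]
The left-hand side equals $\int_{B_{2R}}|\nabla w|^2$, because $\nabla w=\nabla(u-v)\,\chi_{\{u>v\}}$ a.e. On the set $\{w>0\}=\{u>v\}$ we have $u>v\ge0$. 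Since $s\mapsto s^p$ is increasing on $[0,\infty)$, it follows that $u^p-v^p>0$ there, so the right-hand side is $\le 0$. Hence $\int|\nabla w|^2\le 0$, and $w$ is constant on the connected ball. As $w\in H^1_0(B_{2R})$, that constant is $0$, i.e.\ $u\le v$ a.e.\ in $B_{2R}$.

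The proof needs only that the nonlinearity $s\mapsto -Ms^p$ is nonincreasing. Sublinearity plays no role here, which is why no smallness or largeness condition on $M$ appears. The one point to justify is that the weak formulations make sense with this test function. For $u,v\in H^1(B_{2R})$ with $u,v\ge0$ and $0<p<1$, we have $u^p,v^p\in L^{2/p}_{\rm loc}\subset L^2(B_{2R})$, since $u^p\le 1+u$. So $\int u^p w$ is finite for $w\in H^1_0\subset L^2$. The weak inequalities are stated for nonnegative $C^\infty_c$ test functions, and I would extend them to nonnegative $H^1_0$ functions by density: approximate $w$ in $H^1$ by nonnegative smooth compactly supported functions (for example, mollify truncations of $w$ against compactly supported cut-offs) and pass to the limit using the $L^2$ integrability of $u^p$ and $v^p$.

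The main obstacle, mild as it is, is this density/boundary step: making precise that ``$u\le v$ on $\partial B_{2R}$'' means $(u-v)^+\in H^1_0(B_{2R})$, and that nonnegative $H^1_0$ functions are limits of nonnegative test functions. Both are standard facts on the ball, and everything else is the monotonicity argument above.
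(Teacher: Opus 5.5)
Your proof is correct and follows the paper's argument: you subtract the two inequalities, test with $(u-v)^+\in H^1_0(B_{2R})$, and use that $u^p-v^p$ has the same sign as $u-v$ to get $\int_{B_{2R}}|\nabla (u-v)^+|^2\le 0$. Your justifications of the integrability of $u^p,v^p$ and of approximating nonnegative $H^1_0$ test functions by nonnegative smooth ones supply details the paper leaves implicit.
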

\begin{proof}
Define $w=u-v$, which satisfies $-\Delta w+M(u^p-v^p)\leq 0$ in $B_{2R}$ Testing this inequality with the nonnegative function $w^+\in H^1_0(B_{2R})$, and noticing that $u^p-v^p$ and $w$ have the same sign, we obtain
\[
0\ge \int_{B_{2R}} |\nabla w^+|^2  + M (u^p-v^p)w^+ \ge \int_{B_{2R}} |\nabla w^+|^2,
\]
whence $w\le0$ in $B_{2R}$.
\end{proof}
\begin{proof}[Proof of Proposition \ref{prop:deadcore}]
We consider the auxiliary $C^2$ function
\[
z(t)= t^{\frac{2}{1-p}} 
\qquad\text{ which satisfies },\qquad
\begin{cases}
z''(t)=C_p z^{p}   \quad\text{ in } [0,1]\\
0 \le z(t) \leq 1   \quad\ \ \text{ in } [0,1]\\
z(0)=z'(0)=z''(0)=0,
\end{cases}
\qquad
C_p= \frac{2(1+p)}{(1-p)^2}.
\]
A direct computation shows that
\begin{equation}\label{eq:z'-z}
z'(t) - z(t) =t^{\frac{1+p}{1-p}}\left(\frac{2}{1-p} - t\right)>0
 \qquad\ \ \text{ in } [0,1].
\end{equation}
Using $z$, we define 
\[
v(x)=v(|x|)=v(r)=
\begin{cases}
0   \quad &\text{in } [0,R]\\
\displaystyle h r^{1-N}\cdot z\left(\frac{r}{R}-1\right) &\text{in } [R,2R], 
\end{cases}
\qquad\text{ with }
h = A (2R)^{N-1}.
\]
Then $v\in C^2(\overline{B}_{2R})$, $v\ge0$ in $B_{2R}$ and 
\[
v(2R)= h\cdot (2R)^{1-N} = A \ge \left. u\right|_{\partial B_{2R}}.
\]
Based on Lemma \ref{lem:comparison}, if we show that 
\[
- \Delta v \geq -M v^{p} \qquad \text{ in } B_{2R},
\]
 for $M \geq (1+p)(1-p)^{-2}R^{-2}A^{1-p}2^{N(1-p)+p}$, then $0\leq u\leq v$ in $B_{2R}$ and, in particular, $u\equiv 0$ in $B_R$ and the proposition will follow. This is apparent in $B_R$, 
while $R\le |x-x_0|\le 2R$ implies (below, $z$ and its derivatives are evaluated at $\frac{r}{R}-1$)

\[
\begin{split}
- \Delta v &= -v'' -\frac{N-1}{r} v' = -r^{1-N} \left(r^{N-1} v' \right)'\\
&= -r^{1-N} \left(r^{N-1} \left( \frac{h}{R} r^{1-N}\cdot z'
- (N-1)h r^{-N}\cdot z \right)\right)'
= -r^{1-N} \left(\frac{h}{R} \cdot z'
- \frac{(N-1)h}{r} z \right)'
\\
&= -\frac{h}{R^2} r^{1-N}\cdot z''  
+ (N-1)h r^{1-N}\left(\frac{z'}{Rr} - \frac{z}{r^2}\right)
\end{split}
\]
Using \eqref{eq:z'-z} and $r\ge R$, we infer
\[
\frac{z'}{R} - \frac{z}{r} \ge \frac{1}{R}(z'-z)\ge 0,
\]
whence
\[
\begin{split}
- \Delta v &\ge -\frac{h}{R^2} r^{1-N}\cdot z'' = -\frac{hC_p}{R^2} r^{1-N}\cdot z^p =
-\frac{hC_p}{R^2} r^{1-N}\cdot \left(\frac{r^{N-1}}{h}\right)^p v^p 
 =
-\frac{C_p}{R^2} \cdot \left(\frac{h}{r^{N-1}}\right)^{1-p} v^p\\
&\ge -\frac{C_p}{R^2} \cdot \left(\frac{A(2R)^{N-1}}{R^{N-1}}\right)^{1-p} v^p,
\end{split}
\]
which implies the required property.
\end{proof}

Next, we recall the following fact: under our running 
assumptions, the results in \cite{MR3485154} imply that any family of solutions of 
\eqref{eq:gen_sys}, uniformly bounded in $L^\infty(\Omega)$, with bound independent on $\beta$, enjoys 
uniform H\"older bounds. More precisely, we have the following.
\begin{theorem}[{\cite[Theorems 1.2, 1.3]{MR3485154}}]\label{thm:sttz}
Under the assumptions of Theorem \ref{thm:main1}, for every $\alpha\in(0,1)$ and $\widetilde\Omega\Subset \Omega$, there exists a constant 
$C>0$ such that 
\[
\| u_\beta \|_{C^{0,\alpha}(\widetilde {\Omega})}\le C, \qquad \text{ for every }\beta>0. 
\]

Analogously, under the assumptions of Theorem \ref{thm:main2}, for every $\alpha\in(0,1)$ there is 
$C>0$ such that 
\[
\| u_\beta \|_{C^{0,\alpha}(\overline{\Omega})}\le C, \qquad \text{ for every }\beta>0.  
\]
\end{theorem}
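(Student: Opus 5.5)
Since Theorem \ref{thm:sttz} is quoted from \cite{MR3485154}, the proof consists of checking that, under the assumptions of Theorems \ref{thm:main1} and \ref{thm:main2}, the family $(u_\beta)_\beta$ lies in the class treated there. In \cite{MR3485154} the authors consider systems of the form
\[
-\Delta u_i = f_{i,\beta}(x,u_i) - \beta |u_i|^{p-1}u_i \sum_{j\neq i} a_{ij}|u_j|^{p+1},
\]
with symmetric $a_{ij}>0$ and variational exponent $p>0$, including the sublinear range $0<p<1$. The reaction terms $f_{i,\beta}$ are required to be bounded uniformly in $\beta$ along the family. Theorem 1.2 there gives interior $C^{0,\alpha}$ bounds, for every $\alpha\in(0,1)$, on compact subsets for $L^\infty$-bounded families. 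Theorem 1.3 gives the corresponding bounds up to the boundary for solutions vanishing on $\partial\Omega$, with $\Omega$ regular.

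\textbf{Step 1: uniform bound on the reaction terms.} By assumption \ref{i:1} (respectively \ref{iDir:1}), $|u_{i,\beta}|\le A$ in $\Omega$. Then \eqref{eq:ass_f} gives
\[
|f_i(x,u_{i,\beta}(x))| \le L_A |u_{i,\beta}(x)|^p \le L_A A^p \qquad \text{for a.e. } x\in\Omega,
\]
uniformly in $\beta$ and $i$. So the effective right-hand sides $f_i(x,u_{i,\beta})$ are a family of $L^\infty$ functions bounded independently of $\beta$. This is the only quantitative input on the nonlinearities needed. Measurability in $x$ comes from the Carathéodory assumption.

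\textbf{Step 2: regularity framework.} By Remark \ref{rmk:weak-strong}, each $u_\beta$ is a $C^{1,\alpha}_\loc$ (hence classical enough) solution. In the Dirichlet case, $u_\beta\in H^1_0(\Omega)$ with $\Omega$ of class $C^1$ gives continuity up to the boundary with zero trace. Sign-changing components are allowed in \cite{MR3485154}, since the interaction there is written in exactly the odd form $|u_i|^{p-1}u_i|u_j|^{p+1}$. With Step 1, all hypotheses of \cite[Theorems 1.2, 1.3]{MR3485154} hold, and both estimates of Theorem \ref{thm:sttz} follow. The constant depends only on $A$, $L_A$, $p$, $a_{ij}$, $N$, $\alpha$, and either $\widetilde\Omega$ or $\Omega$.

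\textbf{Main obstacle.} The delicate point is confirming that the cited result really covers the sublinear exponent $0<p<1$. In that range the interaction term is not Lipschitz in $u_i$, and the blow-up/Liouville machinery of \cite{MR3485154} must not use a strong maximum principle for single components. If in doubt, I would retrace their contradiction argument. Assume $\|u_\beta\|_{C^{0,\alpha}}$ blows up, rescale around points realizing the Hölder quotient, and pass to the limit. The bounded terms $f_i$ from Step 1 vanish under the rescaling. One is left either with a nontrivial $\alpha$-Hölder entire solution of a competitive system with fixed parameter, or with a segregated limit with harmonic-like components. Their Almgren/Alt--Caffarelli--Friedman-type monotonicity formulas, valid for any $p>0$ since they use only the variational structure and $a_{ij}=a_{ji}$, then yield Liouville theorems that rule both out for $\alpha<1$.
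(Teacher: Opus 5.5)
Your proposal is correct and matches the paper. The paper gives no independent argument: it invokes \cite[Theorems 1.2, 1.3]{MR3485154}, noting that under the running assumptions (the uniform $L^\infty$ bound together with \eqref{eq:ass_f}, so that $f_i(x,u_{i,\beta})$ is bounded independently of $\beta$) the family falls within the scope of that result. Your Step 1 only makes explicit the hypothesis check the paper leaves implicit, and your blow-up sketch is an optional fallback rather than part of the paper's proof.
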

\begin{proof}[Proof of Theorem \ref{thm:main1}] Fix $\widetilde \Omega$ such that $\Omega'\Subset \widetilde \Omega\Subset \Omega$, $\alpha\in (0,1)$, and let $C=C(\widetilde \Omega,\alpha)>0$ be the constant provided by the first part of Theorem \ref{thm:sttz}. Let $\delta<\bar \delta$, and take $x_\beta \in \Omega'$ such that $|u_{l,\beta}(x_\beta)|=\frac{\bar \delta+\delta}{2}>\delta$. For some $R=R(\bar \delta,\alpha,\Omega',\widetilde \Omega)>0$, we have $B_{2R}(x_\beta)\subset \widetilde \Omega$ and
\[
|u_{l,\beta}(x)|\geq |u_{l,\beta}(x_\beta)|-|u_{l,\beta}(x_\beta)-u_{l,\beta}(x)|\geq \frac{\bar \delta+\delta}{2}-C |x-x_\beta|^\alpha>\delta \quad \text{ for } x\in B_{2R}(x_\beta).
\]
 Hence, in particular,
\[
\inf_{\beta>0} \left|\left\{x:|u_{l,\beta}(x)|>\delta\right\}\right|\geq (2R)^N|B_1|>0.
\]
Moreover, we are in a position to apply Proposition \ref{prop:deadcore} to $|u_{i,\beta}|$ in $B_{2R}(x_\beta)$. 
Indeed, by assumption, 
\[
|u_{i,\beta}| \le A \text{ on }\partial B_{2R}(x_\beta),\qquad\qquad
|u_{i,\beta}| \ge 0 \text{ in } B_{2R}(x_\beta);
\]
additionally, using Kato's inequality (see e.g. 
\cite[Corollary 1.2]{MR2056467}) and recalling assumption \eqref{eq:ass_f}, we obtain
\[
\begin{split}
-\Delta |u_{i,\beta}| &\le |f_i(x,u_{i,\beta})|-\beta |u_{i,\beta}|^p\dsum_{j\neq i}a_{ij} 
|u_{j,\beta_n}|^{p+1}
\leq L_A |u_{i,\beta}|^p - \beta |u_{i,\beta}|^p \cdot a_{il} |u_{l,\beta}|^{p+1}\\
&\leq -\left(\beta \cdot a_{il} 2^{-p-1}{ \delta}^{p+1}-L_A\right) |u_{i,\beta}|^p.
\end{split}
\]
Hence, for $\beta$ such that 
\[ 
\beta \geq \bar\beta:= \frac{2^{p+1}}{a_{il}\,\delta^{p+1}}\left(L_A + \frac{(1+p)\,2^{N(1-p)+p}}{(1-p)^2}\frac{A^{1-p}}{R^2} \right),
\] Proposition \ref{prop:deadcore} implies that 
$u_{i,\beta}\equiv 0$ in $B_{R}(x_\beta)$.
\end{proof}
\begin{proof}[Proof of Theorem \ref{thm:main2}] 
In this case, let $C>0$ be the constant provided by the second part of Theorem \ref{thm:sttz}, 
$\delta<\bar \delta$ and take $x_\beta \in \Omega$, $R>0$, such that $|u_{l,\beta}(x_\beta)|=\frac{\bar \delta+\delta}{2}>\delta$, $B_{2R}(x_\beta)\subset  \Omega$ and
\[
|u_{l,\beta}(x)| > \delta \quad \text{ for } x\in B_{2R}(x_\beta).
\]
Then one can easily conclude as in the proof of Theorem \ref{thm:main1}
\end{proof}

\section{Proof of Lemmas \ref{lemma:uniform_bounds} and \ref{lemma:Linfty_critical}}\label{sec:proof_of_lemmas}

We start proving $L^\infty$ estimates in the Sobolev subcritical case.

\begin{proof}[Proof of Lemma \ref{lemma:uniform_bounds}]
  By Lemma B.3 of \cite{Struwe_book} and the remark made on page 271 therein, we have $u\in L^q(\Omega)$ for every $q$ (this is stated in \cite{Struwe_book} for solutions but also holds, with the same proof, for nonnegative subsolutions). Let us extend $u$ by zero to $\R^N\setminus \Omega$, denoting such extension again by $u$ (which is still a subsolution of the same equation in $\R^N$). Then, by \cite[Theorem 8.17]{GilbargTrudinger_book}, we deduce that $u\in L^\infty(\Omega)$. This information allows us to avoid the use of truncated functions in the following arguments.

\smallbreak

\noindent Step 1 (iteration scheme). Since $u\in H^1_0(\Omega)$, by Sobolev inequality we have $u\in L^{2^*}(\Omega)=L^{2(s_0+1)}(\Omega)$, for $s_0:=\frac{2}{N-2}$, with $\|u\|_{2^*}$ bounded uniformly by a constant only depending on $M$ and $N$.
Assume next that, for some $s\geq s_0$, we have $u\in L^{2(s+1)}(\Omega)$. Using $u^{2s+1}\in H^1_0(\Omega)$ as a test function and denoting by $C_S$ the (square of the) best Sobolev constant of the embedding $H^1\hookrightarrow L^{2^*}$, we have
\begin{align}
C_S\frac{2s+1}{(s+1)^2}\| u\|^{2(s+1)}_{{2^*(s+1)}} & \leq \frac{2s+1}{(s+1)^2}\int_\Omega |\nabla u^{s+1}|^2=(2s+1)\int_\Omega u^{2s}|\nabla u|^2 \nonumber = \int_\Omega \nabla u\cdot \nabla u^{2s+1}\\
& \leq a \int_\Omega (u+u^{q})u^{2s+1}= a(\|u\|_{{2(s+1)}}^{2(s+1)}+ \| u\|^{2s+q+1}_{2s+q+1}). \label{eq:rhs}
\end{align}

Since $1<q<2^*-1$, we have $2s+q+1\in (2(s+1),2^*(s+1))$, and (by H\"older's and Young's inequality)
\[
a\| u\|^{2s+q+1}_{2s+q+1}\leq a\|u\|_{2^*(s+1)}^{\frac{N(q-1)}{2}}\|u\|_{2(s+1)}^\frac{4(s+1)-(q-1)(N-2)}{2}\leq \frac{C_S}{2}\frac{2s+1}{(s+1)^2}\|u\|_{2^*(s+1)}^{2(s+1)} + \kappa_s \|u\|_{2(s+1)}^{2(s+1)\sigma_s},
\]
for some $\kappa_s=\kappa_s(a,N,q)$ and 
\[
\sigma_s:=\frac{4(s+1)-(q-1)(N-2)}{4(s+1)-N(q-1)}>1.
\]
Observe that, in order to apply Young's inequality, we used the fact that
\[
\frac{4(s+1)}{N(q-1)}>\frac{4(s_0+1)}{N(2^*-2)}=1.
\]

Going back to \eqref{eq:rhs}, we deduce that
\[
\frac{C_S}{2}\frac{2s+1}{(s+1)^2}\| u\|^{2(s+1)}_{{2^*(s+1)}}\leq a\|u\|_{2(s+1)}^{2(s+1)}
+\kappa_s \|u\|_{2(s+1)}^{2(s+1)\sigma_s}.
\]
\smallbreak
Considering the increasing, diverging sequence
\[
2(s_0+1)=2^*,\qquad 2(s_{k+1}+1)=2^*(s_{k}+1) \quad  \text{ for } k\in \N_0,
\]
recalling the uniform bound of $\|u\|_{2^*}$, only depending on $M$ and $N$, and by an iterative procedure,
there exists $C_k'=C_k'(a,M,N,q)>0$ such that
\begin{align}\label{eq:iteration_aux}
\|u\|^{2(s_{k}+1)}_{2(s_{k+1}+1)}\leq a \|u\|_{2(s_{k}+1)}^{2(s_{k}+1)} + \kappa_{s_{k}}\|u\|_{2(s_k+1)}^{2(s_k+1)\sigma_{s_k}}\leq C_{k+1}' \quad \text{ for every $k\in \N_0$}.
\end{align}

\noindent Step 2 (conclusion). Fixing $k$ large enough so that
\[
t_k:=\frac{4(s_k+1)}{q}>\max\{N,4\}
\]
and using \cite[Theorem 8.17]{GilbargTrudinger_book}, there exists $C_1$ and $C_2$, both depending on $a,N$ and $t_k$, such that, for any ball $B_2(y)\subset \R^N$,
\begin{align*}
\|u\|_{L^\infty(B_1(y))} &\leq C_1(\|u\|_{L^2(B_2(y))}+\|u+u^q\|_{L^{t_k/2}(B_2(y))})\\
				&\leq C_1(\|u\|_{L^2(\Omega)}+\|u\|_{L^{t_k/2}(\Omega)}+\|u\|^q_{L^{t_kq/2}(\Omega)}) \\
				&\leq C_2(\|u\|_{L^2(\Omega)}+\|u\|_{L^{2(s_k+1)}(\Omega)}+ \|u\|^q_{L^{2(s_k+1)}(\Omega)})\\
				&\leq C_2(M+C_k'+(C_k')^q),
\end{align*}
where, in the last step, we have used the fact that  $2<t_k/2< t_kq/2= 2(s_k+1)$, together with Holder's inequality and \eqref{eq:iteration_aux}. Therefore,
\[
\|u\|_{L^\infty(\Omega)}\leq C_2(M+C_k'+(C_k')^q).\qedhere
\]
\end{proof}

In the Sobolev critical case, the classical Moser/Brezis-Kato iteration scheme  (see for instance \cite[Lemma B.3]{Struwe_book}) does not allow, for nonnegative subsolutions of critical equations, a universal $L^\infty$ bound; the extra assumption that allows this is the strong convergence of the solutions. This observation has been done in \cite[pp. 81-82]{ByeonZhangZou_CVPDE_2013}. 

\begin{proof}[Proof of Lemma \ref{lemma:Linfty_critical}]
Repeating Step 1 of the proof of the previous lemma and using its notation, assuming that $u_n\in L^{2(s+1)}(\Omega)$ for some $s>0$, we obtain
\begin{align}\label{eq:iteration_2^*}
C_S\frac{2s+1}{(s+1)^2}\| u_n\|^{2(s+1)}_{{2^*(s+1)}} \leq a( \|u_n\|_{{2(s+1)}}^{2(s+1)}+ \| u_n\|^{2s + 2^*}_{2s + 2^*}).
\end{align}
For $\kappa_N:=\max\{1,2^\frac{6-N}{N-2}\}$ and $\eta>0$, writing $\Omega=\{u_\infty\leq \eta\}\cup \{u_\infty>\eta\}$ and using H\"older's inequality yields
\begin{align}
\| u_n\|^{2s + 2^*}_{2s + 2^*}&=\int_\Omega |u_n|^{2^*-2}|u_n|^{2(s+1)}\leq \kappa_N \int_\Omega |u_n-u_\infty|^{2^*-2}|u_n|^{2(s+1)}+\kappa_N\int_\Omega |u_\infty|^{2^*-2}|u_n|^{2(s+1)} \nonumber \\
		&\leq \kappa_N \eta^{2^*-2}\|u_n\|_{2(s+1)}^{2(s+1)}+\kappa_N(\|u_n-u_\infty\|_{2^*}^{2^*-2}+\|u_\infty\|_{L^{2^*}(\{u_\infty>\eta\})}^{2^*-2} )\|u_n\|_{2^*(s+1)}^{2(s+1)}. \label{eq:iteration_2^*_2}
\end{align}
Choose $\eta=\eta(a,N,s,u_\infty)>0$ and $\bar n=\bar n(a,s,N)>0$ such that
\[
ak_N \|u_\infty\|_{L^{2^*}( \{u_\infty>\eta\})}^{2^*-2}\leq \frac{C_S}{4}\frac{2s+1}{(s+1)^2}\quad \text{ and } \quad a\kappa_N \|u_n-u_\infty\|_{2^*}^{2^*-2}\leq \frac{C_S}{4}\frac{2s+1}{(s+1)^2} \quad \text{ for } n\geq \bar n.
\]
Combining this with \eqref{eq:iteration_2^*} and \eqref{eq:iteration_2^*_2} provides
\[
\frac{C_S}{2}\frac{2s+1}{(s+1)^2}\|u_n\|_{2^*(s+1)}^{2(s+1)}\leq a\kappa_N \eta^{2^*-2}\|u_n\|_{2(s+1)}^{2(s+1)}.
\]
Considering once again the sequence $(s_k)_k$ such that
\[
2(s_0+1):=2^*,\qquad  2(s_{k}+1):=2^*(s_{k-1}+1),
\]
we see there exists $C_k=C_k(a,N,u_\infty)>0$ such that
\begin{equation}\label{eq:criticalcase_iteration}
\|u_n\|_{2(s_k+1)}\leq C_k\|u_n\|_{2^*}.
\end{equation}
From here, we can reason exactly as in Step 2 of the proof of Lemma \ref{lemma:uniform_bounds}, using \eqref{eq:criticalcase_iteration} instead of \eqref{eq:iteration_aux}.
\end{proof}

\textbf{Data Availability.} Data sharing not applicable to this article as no datasets were generated or analyzed during the current study.

\bigskip

\textbf{Competing Interest.} The authors report there are no competing interests to declare.

\bigskip

\textbf{Acknowledgments.}
The authors would like to thank Alberto Salda\~na and Monica Clapp for pointing out an open problem which led to this project, and for useful discussion.

\bigskip

\textbf{Funding Declaration.}

HT was supported by the Funda\c c\~ao para a Ci\^encia e a Tecnologia (FCT), Portugal, under the projects with reference UIDB/04459/2020 with
DOI identifier \url{https://doi.org/10.54499/UIDP/04459/2020} (CAMGSD), reference 2023.17881.
ICDT with DOI identifier \url{https://doi.org/10.54499/2023.17881.ICDT} (project SHADE) and reference 2023.13921.PEX with DOI identifier
\url{https://doi.org/10.54499/2023.13921.PEX} (project SpectralOPs)

Work partially supported by: the MUR grant Dipartimento di Eccellenza 2023-2027; GV is member of 
the INdAM-GNAMPA group (``Gruppo Nazionale per l'Analisi Matematica, la Probabilit\`a e le loro 
Applicazioni -- Istituto Nazionale di Alta Matematica'').

\small
\bibliographystyle{abbrv}
\bibliography{GHbib}

\medskip
\small
\begin{flushright}
\noindent 
\verb"hugo.n.tavares@tecnico.ulisboa.pt"\\
CAMGSD and Departamento de Matem\'atica, Instituto Superior T\'ecnico\\ 
Av. Rovisco Pais, 1049-001 Lisboa (Portugal)\\
\noindent 
\verb"gianmaria.verzini@polimi.it"\\
Dipartimento di Matematica, Politecnico di Milano\\ 
piazza Leonardo da Vinci 32, 20133 Milano (Italy)\\
\noindent
\verb"junwei.yu@polimi.it"\\
Dipartimento di Matematica, Politecnico di Milano\\ 
piazza Leonardo da Vinci 32, 20133 Milano (Italy)\\
\end{flushright}

\end{document}